\newtheorem{theorem}{Theorem}[section]
\newtheorem{corollary}[theorem]{Corollary}
\theoremstyle{definition}
\newtheorem{definition}[theorem]{Definition}
\theoremstyle{remark}
\newtheorem{remark}[theorem]{Remark}
\numberwithin{equation}{section}
\begin{document}
\setcounter{page}{1}

\title[Sharpness  of Seeger-Sogge-Stein orders]{ Sharpness  of Seeger-Sogge-Stein orders for the weak (1,1) boundedness of Fourier integral operators}

\author[D. Cardona ]{Duv\'an Cardona}
\address{
  Duv\'an Cardona:
  \endgraf
  Department of Mathematics: Analysis, Logic and Discrete Mathematics
  \endgraf
  Ghent University, Belgium
  \endgraf
  {\it E-mail address} {\rm duvanc306@gmail.com, duvan.cardonasanchez@ugent.be}
  }

\author[M. Ruzhansky]{Michael Ruzhansky}
\address{
  Michael Ruzhansky:
  \endgraf
  Department of Mathematics: Analysis, Logic and Discrete Mathematics
  \endgraf
  Ghent University, Belgium
  \endgraf
 and
  \endgraf
  School of Mathematical Sciences
  \endgraf
  Queen Mary University of London
  \endgraf
  United Kingdom
  \endgraf
  {\it E-mail address} {\rm michael.ruzhansky@ugent.be, m.ruzhansky@qmul.ac.uk}
  }

\thanks{The authors are supported  by the FWO  Odysseus  1  grant  G.0H94.18N:  Analysis  and  Partial Differential Equations and by the Methusalem programme of the Ghent University Special Research Fund (BOF)
(Grant number 01M01021). Michael Ruzhansky is also supported  by EPSRC grant 
EP/R003025/2.
}

     \keywords{Fourier integral operators,  factorisation condition, weak (1,1) type}
     \subjclass[2010]{35S30, 42B20; Secondary 42B37, 42B35}

\begin{abstract} Let  $X$ and $Y$ be two smooth manifolds  of the same dimension. It was proved by Seeger, Sogge and Stein in \cite{SSS} that the   Fourier integral operators with real non-degenerate phase functions in the class $I^{\mu}_1(X,Y;\Lambda),$ $\mu\leq -(n-1)/2,$ are bounded from $H^1$ to $L^1.$ The sharpness of the order $-(n-1)/2,$ for any elliptic operator was also proved in \cite{SSS} and extended to other types of canonical relations in \cite{Ruzhansky1999}. That the operators in the class  $I^{\mu}_1(X,Y;\Lambda),$ $\mu\leq -(n-1)/2,$  satisfy the weak (1,1) inequality   was proved by Tao \cite{Tao:weak11}. In this note, we prove that the weak (1,1) inequality for the order $ -(n-1)/2$  is sharp  for any elliptic Fourier integral operator, as well as its versions for canonical relations satisfying additional rank conditions. 
\end{abstract} 

\maketitle
\tableofcontents

\allowdisplaybreaks

\section{Introduction} 

Let $X$ and $Y$ be  smooth manifolds of dimension $n.$ In this work, we analyse the sharpness of the order $-(n-1)/2$  for the weak (1,1) inequality of elliptic Fourier integral operators  $T\in I^\mu_1(X,Y;\Lambda)$ with order $\mu\leq -(n-1)/2.$ For the general aspects of the theory of Fourier integral operators we refer the reader to H\"ormander  \cite{Ho}, Duistermaat and H\"ormander \cite{Duistermaat-Hormander:FIOs-2} and  Melin and Sj\"ostrand \cite{Melin:SjostrandI,Melin:SjostrandII}.

First, let us review the mapping properties of Fourier integral operators.  By following \cite{Duistermaat-Hormander:FIOs-2,Ho}, these classes are denoted by $I^\mu_\rho(X,Y;\Lambda),$ with $\Lambda$ being  considered locally a graph of a symplectomorphism from $T^{*}X\setminus 0$ to  $T^{*}Y\setminus 0,$ which are equipped with the canonical symplectic forms $d\sigma_X$ and $d\sigma_Y,$ respectively. Such Fourier integral operators are called non-degenerate.  The symplectic structure of $\Lambda$ is determined by the symplectic $2$-form $\omega$ on $X\times Y,$ $\omega=\sigma_X\oplus -\sigma_Y .$ Let $\pi_{X\times Y}$ be the canonical projection from $T^*X\times T^*Y$ into $X\times Y.$ As in the case of pseudo-differential operators, non-degenerate Fourier integral operators of order zero are bounded on $L^2.$ The fundamental work of Segger, Sogge and Stein \cite{SSS} establishes the boundedness of 
 $T\in I^\mu_1(X,Y;\Lambda),$ from $L^p_{\textnormal{comp}}(Y)$ into $L^p_{\textnormal{loc}}(X)$ with the order
 \begin{equation}\label{critical:SSS}
     \mu\leq -(n-1)|1/2-1/p|,
 \end{equation}if $1<p<\infty,$ and from $H^1_{\textnormal{comp}}(Y)$ into $L^1_{\textnormal{loc}}(X)$ if $p=1.$ Also, for $p=1$ in \eqref{critical:SSS} and as a consequence of the weak (1,1) estimate in  Tao \cite{Tao:weak11}, an operator $T$ of order $-(n-1)/2$ is locally of weak (1,1) type.
 
The critical Seeger-Sogge-Stein order \eqref{critical:SSS} is sharp if $d\pi_{X\times Y}|_{\Lambda}$ has full rank equal to $2n-1$ somewhere and if $T$ is an elliptic operator. When  $d\pi_{X\times Y}|_{\Lambda}$ does not attain the maximal rank $2n-1,$ the upper bound for the order \eqref{critical:SSS} is not sharp and may depend of the singularities of $d\pi_{X\times Y}|_{\Lambda}.$  In conclusion, as it was observed in  \cite{SSS}, the mapping properties of the classes $ I^\mu_\rho(X,Y;\Lambda)$ of Fourier integral operators depend of the singularities and of the maximal rank of the canonical projection. So, an additional condition on the canonical relation $\Lambda$ was introduced in  \cite{SSS}, the so called, factorisation condition for $\pi_{X\times Y}$. Roughly speaking, it can be introduced as follows. Assume that there exists $k\in \mathbb{N},$ with $0\leq k\leq n-1,$ such that for any $\lambda_0=(x_0,\xi_0,y_0,\eta_0)\in \Lambda,$ there is a conic neighborhood $U_{\lambda_0}\subset \Lambda$ of $\lambda_0,$ and a smooth homogeneous of order zero map $\pi_{\lambda_0}:U_{\lambda_0}\rightarrow \Lambda,$ such that 
 \begin{equation}\label{FC:Intro:1}
    \textnormal{(RFC):   } \textnormal{rank}(d\pi_{\lambda_0})=n+k,\textnormal{   and   }\pi_{X\times Y}|_{U_{\lambda_0}}=\pi_{X\times Y}|_{\Lambda}\circ \pi_{\lambda_0}.
\end{equation}Under the real factorisation condition (RFC) in  \eqref{FC:Intro:1},  Seeger, Sogge and Stein in   \cite{SSS} proved that the order
 \begin{equation}\label{rho=1}
     \mu\leq -(k+(n-k)(1-\rho))|1/2-1/p|,
\end{equation}guarantees that any Fourier integral operator $T\in I^\mu_\rho(X,Y;\Lambda),$ with $\rho\in [1/2,1],$ is bounded from $L^p_{\textnormal{comp}}(Y)$ into $L^p_{\textnormal{loc}}(X),$ and for $p=1,$ from $H^1_{\textnormal{comp}}(Y)$ into $L^1_{\textnormal{loc}}(X).$  For a set $A,$ $1_A$ denotes its characteristic function. We recall that $T$    is locally  of weak (1,1) type if, for any pair of compact subsets $K\subset Y,$ and $K'\subset X,$ the localised operator $1_{K'}T1_K:L^{1}(Y)\rightarrow L^{1,\infty}(X)$ is bounded.

 If $\Lambda$ satisfies the factorisation condition (RFC) in \eqref{FC:Intro:1}, because $\textnormal{rank}(d\pi_{\lambda_0})=n+k,$ we have that
 $
    \textnormal{rank}(d{\pi_{X\times Y}}|_{U_{\lambda_0}})\leq n+k,
$
 in an open neighborhood $U_{\lambda_0}$ of $\lambda_0.$ The following Theorem \ref{sharpness} proves that if the rank  $n+k$ is attained somewhere, the order in \eqref{rho=1} for $\rho=1,$ that is $\mu\leq-k/2$ is sharp, for $0\leq k\leq n-1$. The sharpness of the order $\mu\leq-k/2$ for the $H^1_{\textnormal{loc}}(Y)$-$L^{1}_{\textnormal{loc}}(X)$-boundedness of elliptic Fourier integral operators has been proved in  \cite{Ruzhansky1999}, with the case $k=n-1$ known from  \cite{SSS}. Here, we observe that the geometric construction in \cite{Ruzhansky1999} implies also the following sharp Theorem \ref{sharpness}, proving for $k=n-1,$ the converse of the weak (1,1)-inequality in Tao \cite{Tao:weak11} for elliptic Fourier integral operators. More specifically, we have:

\begin{theorem}\label{sharpness}
Let the real canonical relation $\Lambda$ be a local canonical graph such that the inequality $\textnormal{rank}(d{\pi_{X\times Y}}|_{\Lambda})\leq n+k,$ holds with $0\leq k\leq n-1,$ and the rank $n+k$ is attained at some point. Then elliptic operators $T\in I_{1}^{\mu}(X,Y;\Lambda)$ are not locally of weak (1,1) type provided that $\mu>-k/2.$
\end{theorem}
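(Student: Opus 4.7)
The plan is to revisit the geometric construction of the second author in \cite{Ruzhansky1999}, which was designed there to show that the order $-k/2$ is sharp for the $H^1\to L^1$ boundedness of elliptic FIOs, and to upgrade it to the weak $(1,1)$ setting by observing that the family of test functions produced there in fact realises a blow-up of the distribution function of $Tf_N$, and not merely of the $L^1$-norm.

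First I would reduce to a local model. Fix $\lambda_0\in\Lambda$ at which the rank of $d\pi_{X\times Y}|_\Lambda$ equals $n+k$. After composition with elliptic FIOs of order zero on $X$ and $Y$ and shrinking the microlocal neighbourhood, $\Lambda$ is put in a normal form and, by ellipticity of the principal symbol, $T$ becomes (modulo a smoothing error) the oscillatory integral
\begin{equation*}
Tf(x)=\int_{\mathbb R^n} e^{i\varphi(x,\xi)}\,a(x,\xi)\,\widehat f(\xi)\,d\xi,
\end{equation*}
with $a$ elliptic of order $\mu$ and phase $\varphi$ in the Ruzhansky normal form that explicitly separates the $k+1$ non-degenerate directions from the $n-1-k$ degenerate directions of the projection $\pi_{X\times Y}|_\Lambda$ described by the factorisation $\pi_{\lambda_0}$ in \eqref{FC:Intro:1}.

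Next I would construct Knapp-type test functions of the form
\begin{equation*}
f_N(y)=e^{i N\langle\omega,y\rangle}\,\chi\bigl(N^{1/2}y',\,N y''\bigr),
\end{equation*}
where $y=(y',y'')$ splits coordinates along the degenerate and transverse directions, $\omega$ is a frequency direction adapted to $\varphi$, and $\chi$ is a fixed smooth bump. These are the test functions used in \cite{Ruzhansky1999}, up to a mean-zero correction that is needed only for the $H^1$-setting and which I would drop here. Stationary phase for $\varphi$ at frequency $N\omega$ then produces a set $E_N$ and a height $\mu_N$ such that
\begin{equation*}
|Tf_N(x)|\gtrsim \mu_N\ \text{for every } x\in E_N,\qquad \mu_N\,|E_N|\gtrsim N^{\mu+k/2}\,\|f_N\|_{L^1}.
\end{equation*}
Consequently $\mu_N\,\bigl|\{|Tf_N|>\mu_N/2\}\bigr|\gtrsim N^{\mu+k/2}\,\|f_N\|_{L^1}$, so the ratio of the weak-$L^1$ quasi-norm of $Tf_N$ to $\|f_N\|_{L^1}$ diverges as $N\to\infty$ whenever $\mu>-k/2$, ruling out weak $(1,1)$ boundedness of $T$.

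The hard part will be to upgrade the $L^1$-lower bound on $Tf_N$ from \cite{Ruzhansky1999} to a \emph{pointwise} lower bound on $E_N$, since weak $(1,1)$ is a statement about the distribution function rather than the $L^1$-norm. Concretely, one has to check that the oscillatory factor produced by stationary phase on $E_N$ is constant up to a bounded multiplicative error; this is implicit in the construction of \cite{Ruzhansky1999}, so only a marginal strengthening of the bookkeeping is required. A useful cross-check in the range $\mu\in(-k/2,0]$ is provided by Marcinkiewicz interpolation: a hypothetical weak $(1,1)$ bound combined with the $L^2$-boundedness of $T$ would force $T$ to be $L^p$-bounded for every $1<p<2$, contradicting the sharpness of the $L^p$ orders proved in \cite{Ruzhansky1999} as $p\to 1^+$.
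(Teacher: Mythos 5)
There is a genuine gap, and it sits exactly at the step you flag as ``the hard part'': the Knapp-type family $f_N(y)=e^{iN\langle\omega,y\rangle}\chi(N^{1/2}y',Ny'')$ cannot produce the claimed bound $\mu_N\,|E_N|\gtrsim N^{\mu+k/2}\Vert f_N\Vert_{L^1}$. Since $\Lambda$ is a local canonical graph, an elliptic $T\in I^\mu_1$ maps a single wave packet to what is essentially another wave packet: modulo rapidly decaying tails, $|Tf_N|\sim N^{\mu}\sup|f_N|$ on a (tilted, translated) box of measure comparable to $|\operatorname{supp}f_N|$. Hence $\Vert Tf_N\Vert_{L^{1,\infty}}/\Vert f_N\Vert_{L^1}\sim N^{\mu}$, which diverges only for $\mu>0$; there is no extra factor $N^{k/2}$. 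Knapp packets are the examples adapted to $p\geq 2$ (or dual) estimates; for $p<2$, and in particular for weak $(1,1)$, one needs a \emph{focusing} example in which all frequency directions are superposed, so that the image spreads over a $1/N$-neighbourhood of the $k$-dimensional set $\Sigma_{y_0}=\{x:(x,y_0)\in\Sigma\}$ with amplitude $\sim N^{(n-k)+\mu+k/2}$; only then does the weak-$L^1$ quasi-norm grow like $N^{\mu+k/2}$. Incidentally, this is also what \cite{Ruzhansky1999} actually does: the construction there tests $T$ on (a regularisation of) $\delta_{y_0}$, not on Knapp packets with a mean-zero correction, so the ``marginal strengthening of the bookkeeping'' you hope for is not available from that source.

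The paper's proof avoids limiting families altogether: it tests $T$ on the single fixed function $\varkappa=(1-\Delta)^{-s/2}\delta_{y_0}\in L^1_{\mathrm{loc}}$ with $0<s<\mu+k/2$, and uses the constant-rank geometry near a point where $\operatorname{rank}(d\pi_{X\times Y}|_\Lambda)=n+k$ to write the phase as $\sum_{j=1}^{n-k}\lambda_j h_j(x,y)$; stationary phase in the remaining $k$ frequency variables then gives $|T\varkappa(x)|\asymp \operatorname{dist}(x,\Sigma_{y_0})^{-(n-k)-(\mu-s+k/2)}$, which fails to lie in $L^{1,\infty}$ near the codimension-$(n-k)$ set $\Sigma_{y_0}$ precisely because $\mu-s+k/2>0$. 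If you want to keep a family-based argument, replace your packets by mollified deltas (e.g. $f_N(y)=N^n\chi(N(y-y_0))$) and carry out the same stationary-phase analysis on the tube $\{\operatorname{dist}(x,\Sigma_{y_0})\lesssim N^{-1}\}$. Your interpolation remark, by contrast, is sound and could even be promoted to a complete proof in the range $-k/2<\mu\leq 0$: local weak $(1,1)$ together with $L^2$-boundedness of graph FIOs of nonpositive order would give local $L^p$-boundedness for all $1<p<2$, contradicting the $L^p$-sharpness of \cite{Ruzhansky1999} for $p$ close to $1$; combined with your (correct) $\mu>0$ computation this would cover the theorem, but as written the main argument fails exactly in the essential range $-k/2<\mu\leq 0$.
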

\begin{remark}
 In the endpoint case $k=n-1,$ $\textnormal{rank}(d{\pi_{X\times Y}}|_{\Lambda})\leq 2n-1,$ the elliptic operators $T\in I_{1}^{\mu}(X,Y;\Lambda)$ are not locally of weak (1,1) type provided that $\mu>-(n-1)/2.$ In view of the weak (1,1) estimate in Tao \cite{Tao:weak11} for the class $I_{1}^{-(n-1)/2}(X,Y;\Lambda),$ when the real canonical  relation has full rank  $\textnormal{rank}(d{\pi_{X\times Y}}|_{\Lambda})\leq 2n-1,$ the main Theorem 1.1  in  \cite{Tao:weak11} together with Theorem \ref{sharpness} imply the following result. 
\end{remark}
\begin{corollary} Let the real canonical relation $\Lambda$ be a local canonical graph such that (RFC) in \eqref{FC:Intro:1} is satisfied for $k=n-1.$ Let $T\in I_{1}^{\mu}(X,Y;\Lambda)$ be an elliptic Fourier integral operator. Then, $T$ is locally of weak (1,1) 
type if and only if, $\mu\leq -(n-1)/2.$
\end{corollary}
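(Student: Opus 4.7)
My plan is to deduce the Corollary by simply combining the two results already invoked in the text: the weak $(1,1)$ boundedness theorem of Tao (Theorem 1.1 in \cite{Tao:weak11}) and Theorem \ref{sharpness} above specialised to $k=n-1$. Both directions will then be essentially one-line consequences once the hypotheses are matched up.

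For the sufficiency direction $(\Leftarrow)$, I assume $\mu\le -(n-1)/2$. The ellipticity assumption plays no role here; I simply use the inclusion $I_1^{\mu}(X,Y;\Lambda)\subseteq I_1^{-(n-1)/2}(X,Y;\Lambda)$ and apply Tao's theorem directly, whose geometric hypothesis (that $\Lambda$ be a local canonical graph) is exactly the one made in the Corollary. This yields that $T$ is locally of weak $(1,1)$ type.

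For the necessity direction $(\Rightarrow)$, I argue by contrapositive: assume $\mu>-(n-1)/2$ and show that $T$ fails to be locally of weak $(1,1)$ type. From (RFC) at $k=n-1$ together with the factorisation $\pi_{X\times Y}|_{U_{\lambda_0}}=\pi_{X\times Y}|_\Lambda\circ \pi_{\lambda_0}$ and $\textnormal{rank}(d\pi_{\lambda_0})=2n-1$, one reads off the pointwise bound $\textnormal{rank}(d\pi_{X\times Y}|_\Lambda)\le 2n-1$ on all of $\Lambda$; this is precisely the observation already recorded immediately before Theorem \ref{sharpness}. The remaining ingredient is that the maximal rank $2n-1$ is actually attained at some point, which is the natural non-triviality condition implicit in the assertion that (RFC) holds \emph{exactly} at level $k=n-1$ (otherwise one would be in the regime of (RFC) with a strictly smaller $k$, contrary to hypothesis). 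With rank attainment in hand, the assumptions of Theorem \ref{sharpness} are verified with $k=n-1$, and that theorem delivers exactly the desired conclusion that elliptic $T\in I_1^{\mu}(X,Y;\Lambda)$ with $\mu>-(n-1)/2$ is not locally of weak $(1,1)$ type. The contrapositive completes the argument.

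The real difficulty of the whole chain of reasoning is not present in the Corollary itself but rather is bundled inside Theorem \ref{sharpness}, whose proof imports the geometric test-function construction of \cite{Ruzhansky1999}; at the level of the Corollary, the only conceptual point deserving attention is the rank attainment discussed above, which is the standard reading of the factorisation condition at the given value of $k$.
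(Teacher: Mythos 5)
Your overall route is the same as the paper's: the corollary is obtained by combining Tao's weak $(1,1)$ theorem (for the ``if'' direction, where indeed ellipticity and the inclusion $I_1^{\mu}\subseteq I_1^{-(n-1)/2}$ suffice) with Theorem \ref{sharpness} at $k=n-1$ (for the ``only if'' direction). The sufficiency half is fine as you wrote it.

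The weak point is the hypothesis-matching step in the necessity half, where you claim that attainment of the maximal rank $2n-1$ is ``implicit in the assertion that (RFC) holds exactly at level $k=n-1$.'' This inference is not valid. For any conic canonical relation which is a local canonical graph, (RFC) with $k=n-1$ is automatically satisfied: one may take $\pi_{\lambda_0}(x,\xi,y,\eta)=(x,\xi/|\xi|,y,\eta/|\xi|)$, which is smooth, homogeneous of degree zero, maps $U_{\lambda_0}$ into $\Lambda$ (by conicity), has rank exactly $2n-1$ (it is a retraction onto the slice $|\xi|=1$), and satisfies $\pi_{X\times Y}|_{U_{\lambda_0}}=\pi_{X\times Y}|_{\Lambda}\circ\pi_{\lambda_0}$ since it does not move the base point. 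Hence (RFC) at $k=n-1$ carries no information about whether $\textnormal{rank}(d\pi_{X\times Y}|_{\Lambda})$ ever reaches $2n-1$; for instance, the conormal of the diagonal (pseudo-differential case) satisfies it while the rank equals $n$ everywhere, and there elliptic operators of order $\mu\in(-(n-1)/2,0]$ \emph{are} weak $(1,1)$. Nor does ruling out (RFC) at smaller $k$ rescue the inference, since the factorisation condition can fail at smaller $k$ for reasons of non-smooth factorisation without the rank ever attaining $2n-1$, and in any case the corollary's hypothesis does not assert such failure. The correct reading --- and the one the paper's Remark tacitly uses via the phrase ``full rank'' --- is that the maximal rank $2n-1$ is attained at some point, exactly as in the hypothesis of Theorem \ref{sharpness}; with that assumption imported explicitly, your two-step argument coincides with the paper's and is complete. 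So the gap is not in the architecture of your proof but in attempting to derive the rank-attainment hypothesis from (RFC) at $k=n-1$, which cannot be done.
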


\section{Preliminaries}

\subsection{Basics on symplectic geometry}
Let $M,$ $X$ and $Y$ be (paracompact) smooth real manifolds of dimension $n.$ So, using partitions of the unity the spaces $L^{1}(Y)$ and $L^{1,\infty}(X)$ are defined by the set of functions that under any  changes of coordinates belong to $L^{1}(\mathbb{R}^n)$ and  $L^{1,\infty}(\mathbb{R}^n),$ respectively. For instance, we can take $M=X,Y$ or $M=X\times Y.$ In this section we will follow  \cite[Chapter I]{Ruzhansky:CWI-book}.

A $2$-form $\omega$ is called {\it symplectic}  on $M$ if $d\omega =0,$ and for all $x\in M,$ the bilinear form $\omega_x$  is antisymmetric and non-degenerate on $T_{x}M$. The canonical symplectic form $\sigma_M$ on $M$ is defined as follows. Let $\pi:=\pi_M:T^*M\rightarrow M,$ be the canonical projection. For any $(x,\xi)\in T^*M,$ let us consider the linear mappings
\begin{equation*}
    d\pi_{(x,\xi)}:T_{(x,\xi)}(T^*M)\rightarrow T_{x}M\textnormal{   and,   }\xi:T_{x}M\rightarrow \mathbb{R}.
\end{equation*}The composition $\alpha_{(x,\xi)}:=\xi\circ d\pi_{(x,\xi)}\in T^{*}_{(x,\xi)}(T^*M),$ that is
$
   \xi\circ d\pi_{(x,\xi)}:T_{(x,\xi)}(T^*M)\rightarrow  \mathbb{R},
$ defines  a 1-form $\alpha $ on $T^*M.$ Then, the canonical symplectic form $\sigma_M$ on $M$ is defined by 
\begin{equation}\label{sigmaM}
    \sigma_M:=d\alpha.
\end{equation}Because $\sigma_M$ is an exact form, it follows that $d\sigma_M=0$ and then  that $\sigma_M$ is symplectic. If $M=X\times Y,$ it follows that  $\sigma_{X\times Y}=\sigma_X\oplus -\sigma_Y .$ Now we record the  kind of submanifolds  that are necessary when one defines the canonical relations.
\begin{itemize}
    \item A submanifold $\Lambda\subset T^*M$ of dimension $n$ is called {\it Lagrangian} if 
\begin{equation*}
    T_{(x,\xi)}\Lambda=( T_{(x,\xi)}\Lambda)^{\sigma}:=\{y\in T_{(x,\xi)}(T^*M):\sigma_M(y,y')=0,\,\forall y'\in T_{(x,\xi)}\Lambda \}.
\end{equation*}
\item We say that  $\Lambda\subset T^*M\setminus 0$ is {\it conic} if $(x,\xi)\in \Gamma,$ implies that $(x,t\xi)\in \Gamma,$ for all $t>0.$ 
\item Let $\Sigma\subset X$ be a smooth submanifold of $X$ of dimension $k.$ Its conormal bundle in $T^*X$ is defined by
\begin{equation}
    N^{*}\Sigma:=\{(x,\xi)\in T^*X: \,x\in \Sigma,\,\,\xi(\delta)=0,\,\forall \delta\in T_{x}\Sigma  \}.
\end{equation}
\end{itemize}
The following facts characterise the Lagrangian submanifolds of $T^*M.$
\begin{itemize}
    \item Let $\Lambda\subset T^*M\setminus 0,$ be a closed sub-manifold of dimension $n.$ Then $\Lambda$ is a conic Lagrangian manifold if and only if the 1-form $\alpha$ in \eqref{sigmaM} vanishes on $\Lambda.$
    \item Let $\Sigma\subset X,$ be a submanifold of dimension $k.$ Then its conormal bundle $N^*\Sigma$ is a conic Lagrangian manifold. 
\end{itemize}
The Lagrangian manifolds have the following property.
\begin{itemize}
    \item Let $\Lambda\subset T^{*}M\setminus 0,$ be a conic Lagrangian manifold and let 
    \begin{equation}
        d\pi_{(x,\xi)}:T_{(x,\xi)}\Lambda\rightarrow T_{x}M,
    \end{equation}have constant rank equal to $k,$ for all $(x,\xi)\in \Lambda.$ Then, each $(x,\xi)\in \Gamma$ has a conic neighborhood $\Gamma$ such that 
    \begin{itemize}
        \item[1.] $\Sigma=\pi(\Gamma\cap \Lambda)$ is a smooth manifold of dimension $k.$
        \item[2.] $\Gamma\cap \Lambda$ is an open subset of $N^*\Sigma.$
    \end{itemize}
\end{itemize}
The Lagrangian manifolds have a local representation defined in terms of {\it phase functions} that can be defined as follows. For this, let us consider a local trivialisation $M\times (\mathbb{R}^n\setminus 0), $ where we can assume that $M$ is an open subset of $\mathbb{R}^n.$

\begin{definition}[Real-valued phase functions]Let $\Gamma$ be a cone in $M\times (\mathbb{R}^N\setminus 0). $ A smooth function $\phi:M\times (\mathbb{R}^N\setminus 0)\rightarrow \mathbb{R}, $ $(x,\theta)\mapsto \phi(x,\theta),$ is a real {\it phase function} if, it is homogeneous of degree one in $\theta$ and has no critical points as a function of $(x,\theta),$ that is 
\begin{equation}
 \forall t>0,\,   \phi(x,t\theta)=t\phi(x,\theta), \textnormal{   and   }d_{(x,\theta)}\phi(x,\theta)\neq 0,\forall (x,\theta)\in M\times (\mathbb{R}^N\setminus 0).
\end{equation}Additionally, we say that $\phi$ is non-degenerate in $\Gamma,$ if for $(x,\theta)\in \Gamma$ such that $d_{\theta}\phi(x,\theta)=0,$ one has that 
\begin{equation}
    d_{(x,\theta)}\frac{\partial \phi}{\partial\theta_{j}}(x,\theta),\,\,1\leq j\leq N,
\end{equation}is a system of linearly independent vectors.
\end{definition}
The following facts describe locally a Lagrangian manifold in terms of a phase function.
\begin{itemize}
    \item Let $\Gamma$ be a cone in $M\times (\mathbb{R}^N\setminus 0), $ and let $\phi$ be a non-degenerate phase function in $\Gamma.$ Then, there exists an open cone $\tilde{\Gamma}$ containing $\Gamma$ such that the set
    \begin{equation}
        U_{\phi}=\{(x,\theta)\in \tilde{\Gamma}:d_\theta \phi(x,\theta)=0\},
    \end{equation}is a smooth conic  sub-manifold of $M\times (\mathbb{R}^N\setminus 0)$ of dimension $n.$ The mapping 
    \begin{equation}
        L_{\phi}: U_\phi\rightarrow T^*M\setminus 0,\,\, L_{\phi}(x,\theta)=(x,d_x\phi(x,\theta)),
    \end{equation}is an inmersion. Let us denote $\Lambda_\phi=L_{\phi}(U_\phi).$
    \item Let $\Lambda\subset T^*M\setminus 0$ be a sub-manifold of dimension $n.$ Then,  $\Lambda$ is a conical Lagrangian manifold if and only if  every $(x,\xi)\in \Lambda$ has a conic neighborhood $\Gamma$ such that $\Gamma\cap \Lambda=\Lambda_\phi, $ for some non-degenerate phase function $\phi.$ 
\end{itemize}
\begin{remark}
The cone condition on $\Lambda$ corresponds to the homogeneity of the phase function.
\end{remark}
\begin{remark} Although we have given definition  when a real  phase function  of  $(x,\theta)$ is non-degenerate, the same can be defined if one considers functions of $(x,y,\theta).$ Indeed, a real valued phase function $\phi(x,y,\theta)$ homogeneous of order 1 at $\theta\neq 0$  that satisfies the following two conditions
 \begin{equation}
        \textnormal{det}\partial_x \partial_\theta(\phi(x,y,\theta))\neq 0,\,\, \textnormal{det}\partial_y \partial_\theta(\phi(x,y,\theta))\neq 0,\,\,\theta\neq 0,
    \end{equation} is called  non-degenerate. 

\end{remark}

\subsection{Smooth factorisation condition for real phases} We can assume that  $X, Y$ are open sets in $\mathbb{R}^n$. One defines the class of
  Fourier integral operators $T\in I^\mu_{\rho}(X, Y;\Lambda)$ by
  the (microlocal) formula
   \begin{equation}\label{EQ:FIO}
     Tf(x)=\int\limits_Y\int\limits_{\mathbb{R}^N}
   e^{i\Psi(x,y,\theta)} a(x,y,\theta) f(y)d\theta\;dy,
   \end{equation} 
   where the symbol $a$ is a smooth function locally  in the class $S^\mu_{\rho,1-\rho}(X\times Y\times  (\mathbb{R}^n\setminus 0) ),$ with $1/2\leq \rho\leq 1.$ This means that  $a$ satisfies the symbol inequalities
   $$ |\partial_{x,y}^{\beta}\partial_\theta^\alpha
   a(x,y,\theta)|
    \leq C_{\alpha\beta}(1+|\theta|)^{\mu-\rho|\alpha|+(1-\rho)|\beta|},$$
   for $(x,y)$ in any compact subset $K$ of $X\times Y,$ and $\theta \in \mathbb{R}^N\setminus 0,$ while the real-valued phase function $\Psi$ satisfies 
     the following properties:
\begin{itemize}
    \item[1.] $\Psi(x,y,\lambda\theta)=
      \lambda\Psi(x,y,\theta),$ for all $\lambda>0$;
    \item[2.] $d\Psi\not=0$;
    \item[3.] $\{d_\theta\Psi=0\}$ is smooth 
      (i.e. $d_\theta\Psi=0$ implies
      $d_{(x,y,\theta)}\frac{\partial\Psi}{\partial\theta_j}$ are linearly
      independent).
\end{itemize}
Here $\Lambda \subset T^*(X\times Y)\setminus 0$ is a Lagrangian manifold locally parametrised by the phase function $\Psi,$
 $$ \Lambda=\Lambda_\Psi=\{(x,d_x\Psi,y,d_y\Psi): d_\theta\Psi=0\}.$$
The   canonical relation associated with $T$
  is the conic Lagrangian manifold in 
  $T^*(X\times Y)\backslash 0$,  defined by
  $\Lambda'=\{(x,\xi,y,-\eta): (x,\xi,y,\eta)\in \Lambda\}.$
  In view of the equivalence-of-phase-functions theorem (see e.g. Theorem 1.1.3 in \cite[Page 9]{Ruzhansky:CWI-book}), the notion of Fourier integral operator becomes independent of the choice of a particular phase function associated to a Lagrangian manifold $\Lambda.$  Because of the diffeomorphism $\Lambda\cong \Lambda',$ we do not distinguish between $\Lambda $ and $\Lambda'$ by saying also that $\Lambda$ is the canonical relation associated with $T.$ 

Let us consider the canonical projections:
    \begin{equation*}
    \begin{array}{ccccc}
       T^*X & \stackrel{\pi_X}{\longleftarrow} &
        \Lambda \subset T^*X\times T^*Y& 
        \stackrel{\pi_Y}{\longrightarrow} & T^*Y. \\
      & & \Big\downarrow\vcenter{%
         \rlap{$\pi_{X\times Y}$}}
      & & \\
      & & X\times Y & &
    \end{array}
   \label{eq:in-defpr}
  \end{equation*}
The smooth factorisation condition for $\Psi$ can be formulated as follows (see \cite{SSS} or e.g. \cite[Page 45]{Ruzhansky:CWI-book}). Suppose that there exists a number $k,$ with $0\leq k\leq n-1,$ such that for any $$\lambda_0=(x_0,\xi_0,y_0,\eta_0)\in \Lambda_\Psi, $$ there exists a conic neighborhood $U_{\lambda_0}\subset \Lambda_{\Psi},$ of $\lambda_0,$ and a smooth homogeneous of order zero map 
\begin{equation}
    \pi_{\lambda_0}:U_{\lambda_0}\rightarrow \Lambda_{\Psi},
\end{equation}with constant rank $\textnormal{rank}(d\pi_{\lambda_0})=n+k,$ for which one has
\begin{equation}\label{FCR}
   \textnormal{(RFC):  } \boxed{ \pi_{X\times Y}|_{U_{\lambda_0}}=\pi_{X\times Y}|_{\Lambda_{\Psi}}\circ   \pi_{\lambda_0}       }
\end{equation} 
In this case we say that the canonical relation $\Lambda:=\Lambda_\Psi$ satisfies the factorisation condition $\textnormal{(RFC)}.$

\section{Sharpness of Seeger-Sogge-Stein orders}
 Now, we will prove the sharpness of the Seeger-Sogge-Stein order for $\rho=1$ in the case of elliptic Fourier integral operators. 
\begin{proof}[Proof of Theorem \ref{sharpness}] Let $\mu>-k/2.$ Let us follow the  argument in \cite[Page 42]{Ruzhansky:CWI-book} (see also \cite{Ruzhansky1999}) and to analyse the weak (1,1) estimate at the end of the proof. Using again the equivalence-of-phase-function theorem, it is enough to consider elliptic operators $T$ on $\mathbb{R}^n$ with kernels, locally defined by
\begin{equation}
    K(x,y)=\int\limits_{\mathbb{R}^n}e^{i\Phi(x,y,\xi)}b(x,y,\xi)d\xi,\,\, \Phi(x,y,\xi):= x\cdot \xi-\phi(y,\xi),
\end{equation}with symbols $b(x,y,\xi)$ compactly supported in $(x,y).$ That $\Lambda$ satisfies the local graph condition means that the real-values phase function $\phi$ satisfies
\begin{equation}
    \textnormal{det}\partial_{y}\partial_{\xi}\phi(y,\xi)\neq 0
\end{equation} on the support of the symbol $b,$ and $\xi\neq 0.$ Let us observe that
\begin{equation}
    \Lambda_0:=\{\lambda\in \Lambda: \textnormal{rank }d\pi_{X\times Y}|_{\Lambda}(\lambda)=n+k\},
\end{equation}is not empty and is open in $\Lambda.$ Fix a point $\lambda_0\in \Lambda_0.$ Let $\Delta:=\sum_{j=1}^n\partial_{x_j}^2$ be the standard Laplacian on $\mathbb{R}^n,$ and define the distribution
\begin{equation}
    \varkappa(y):=(1-\Delta)^{-s/2}\delta_{y_0}, 
\end{equation}for a fixed point $y_0\in Y,$ where $s>0$ is small enough in such a way  that 
\begin{equation}\label{defi:of:s}
    -\frac{k}{2}+s<\mu.
\end{equation}
Since $(1-\Delta)^{-s/2}$ is an elliptic pseudo-differential operator of order $-s$, its Schwartz kernel $K_{s}$ is of Calder\'on-Zygmund type, and it satisfies the inequality  $|K_s(y,y_0)|\leq C |y-y_0|^{-n+s},$ in some local coordinate system. So, the fact that $s>0$ implies that
\begin{equation}
    \varkappa(y)=\int\limits_{\mathbb{R}^n}K_{s}(y,z)\delta_{y_0}(z)dz=K_{s}(y,y_0)\in L^{1}_{\textnormal{loc}}.
\end{equation}
Now, we are going to introduce  the geometric construction in  \cite{Ruzhansky1999} (see also \cite[Page 42]{Ruzhansky:CWI-book}) in order to obtain a useful parametrisation of the phase function $\Phi$.
Let $\Sigma=\pi_{X\times Y}(C\cap U),$ where $U\subset \Lambda_0$ is a neighborhood of $\lambda_0.$ Taking into account that $\textnormal{rank }d\pi_{X\times Y}|_{U}=n+k,$ that is the rank of $d\pi_{X\times Y}$ is constant in $U,$ $\Sigma $ is a $k$-dimensional sub-manifold defined by the equations
\begin{equation}
    h_{j}(x,y)=0,\,1\leq j\leq n-k,
\end{equation} in a neighborhood of $y_0,$ with the system $\{\nabla h_j:1\leq j\leq n-k\}$ being linearly independent on $\Sigma.$ Then $\Lambda$ is the conormal bundle of $\Sigma,$ and the phase function of $T$ takes the representation
\begin{equation}
    \Phi(x,y,\lambda)=\sum_{j=1}^{n-k}\lambda_jh_j(x,y).
\end{equation} Because compositions of Fourier integral operators with pseudo-differential operators leaves invariant the canonical relation $\Lambda$, we have that $T\circ (1-\Delta)^{-s/2}\in I^{\mu-s}_1(X,Y,\Lambda).$ So, in local coordinates we have
\begin{align*}
T\varkappa(x) &=T\circ (1-\Delta)^{-s/2}\delta_{y_0}=\int\limits_{\mathbb{R}^n}\int\limits_{\mathbb{R}^{n-k}}e^{i\left(\sum_{j=1}^{n-k}\lambda_jh_j(x,y)\right)}a(x,\overline{\lambda})\delta_{y_0}(y)d\overline{\lambda}dy\\
&=\int\limits_{\mathbb{R}^{n-k}}e^{i\overline{\lambda}\cdot \overline{h}(x,y_0)}a(x,\overline{\lambda})d\overline{\lambda}=(2\pi)^{n-k}(\mathscr{F}^{-1}a)(x,\overline{h}(x,y_0)),
\end{align*}where $\overline{\lambda}$ and $\overline{h}$ are vectors with components $\lambda_j$ and $h_j,$ respectively, and $\mathscr{F}$ denotes the  Fourier transform. The symbol $a\in S^{\mu-s+\frac{k}{2}}_{1,0}(\mathbb{R}^{n-k}),$ is obtained from the symbol of the operator $T\circ (1-\Delta)^{-s/2}$ by using the stationary method phase, where we have eliminated $k$-variables. Computing the second argument of $(2\pi)^{n-k}\mathscr{F}^{-1}a,$ one has
\begin{equation}\label{smoothidentity}
    (2\pi)^{n-k}\mathscr{F}^{-1}a(x,\zeta)=\int\limits_{\mathbb{R}^{n-k}}e^{i\lambda\cdot \zeta}a(x,\lambda)\mathscr{F}({\delta}_0)(\lambda)d\lambda=P\delta_{0}(\zeta),
\end{equation}where $P$ is a pseudo-differential operator in $\mathbb{R}^{n-k}$ of order $m=n-s+\frac{k}{2}.$ Denoting $K_P$ the Schwartz kernel of $P,$ we have that $P\delta_0(\zeta)=K_{P}(\zeta,0),$ and then one has 
\begin{equation}
    |K_{P}(\zeta,0)|\sim |\zeta|^{-(n-k)-m}, \,\,m=n-s+\frac{k}{2}.
\end{equation}
 Define the set $$\Sigma_{y_0}:=\{x:(x,y_0)\in \Sigma\}.$$
 We have that $\textnormal{distance}(x,\Sigma_{y_0})\asymp |\overline{h}(x,y_0)|.$ Consequently,
 \begin{align*}
    |(2\pi)^{n-k}\mathscr{F}^{-1}a(x,\zeta)|\asymp  \textnormal{distance}(x,\Sigma_{y_0})^{-(n-k)-(\mu-s+\frac{k}{2})},
 \end{align*}locally uniformly in $x.$ The identity \eqref{smoothidentity} implies that $T\varkappa$ is smooth on $\Sigma_{y_0}.$ Now, we apply the geometric construction above to test the operator $T$ into the distribution $\varkappa.$  We have proved the estimate
\begin{align*}
   | T\varkappa(x)| &=|K_{P}(\overline{h}(x,y_0),0)| \asymp |\overline{h}(x,y_0)|  \asymp \textnormal{distance}(x,\Sigma_{y_0})^{-(n-k)-(\mu-s+\frac{k}{2})},
   \end{align*} and that the singularities of $T\varkappa$ can  appear only in  transversal directions to $\Sigma.$ Now, to finish the proof,
let $\Omega=\overline{B(y_0,r)}$ be the compact neighborhood of $y_0,$ with radius $r>0.$ Observing that for any $x\in \Omega,$ $\overline{h}(x,y_0)\in \mathbb{R}^{n-k},$ $ T\varkappa \in L^{1}(\Omega),$  if and only if 
 $$ (n-k)+(\mu-s+\frac{k}{2})< n-k,    $$ 
 and that 
$T\varkappa\in L^{1,\infty}(\Omega)\setminus L^{1}(\Omega),$ 
  if and only if 
 $$ (n-k)+(\mu-s+\frac{k}{2})= n-k,  $$ 
it follows that $  T\varkappa \notin L^{1,\infty}(\Omega)$ if and only if 
 \begin{equation}
     (n-k)+(\mu-s+\frac{k}{2})>n-k,
 \end{equation}or equivalently $\mu>s-\frac{k}{2}$ as in \eqref{defi:of:s}. In conclusion we have that  $\varkappa$ is  locally in $L^1$ and that $T\varkappa \notin L^{1,\infty}(\Omega)$ showing that $T$ is not locally of weak (1,1) type.
\end{proof}

\end{document}